\documentclass{article}

\usepackage[french,english]{babel}
\usepackage[T1]{fontenc}
\usepackage[utf8]{inputenc}
\usepackage{lmodern}
\usepackage{amsmath,amsthm,amssymb, amsfonts}
\usepackage[left=1in, right=1in, top=1in, bottom=1in]{geometry} 
\usepackage{indentfirst}
\usepackage{fourier}
\usepackage{bm}
\usepackage{float}
\usepackage[colorlinks]{hyperref}
\usepackage{csquotes}
\usepackage{array}
\newtheorem{theorem}{Theorem}[section]

\newtheorem{theoreme}{Théorème}[section]

\newtheorem{remark}{Remark}[section]
\theoremstyle{remark}
\newtheorem{ex}{Example}[section]
\theoremstyle{definition}

\newcommand{\W}{\mathcal{W}}
\newcommand{\R}{\mathbb{R}}

\newcommand{\dr}{\mathrm{d}}

\title{Quantitative sensitivity analysis for Fokker-Planck equation with respect to the Wasserstein distance}
\author{Martin Morange\footnote{Inria, CMAP, CNRS, École polytechnique, Institut Polytechnique de Paris, 91120 Palaiseau, France. Mail: martin.morange@inria.fr}}
\date{}

\begin{document}

\maketitle

\selectlanguage{english}
\begin{abstract}
	We analyze the sensitivity of solutions to the Fokker-Planck equation with respect to some unknown parameter. Our main result is to provide quantitative upper bounds for the $p$-Wasserstein distance $\W_p$ between two solutions with different parameters, for every $p \geq 2$. We are able to give two proofs of this result, the first relying on synchronous coupling between two solutions of an SDE, and another one that relies on the differentiation of Kantorovitch dual formulation of optimal transport. We also provide more specific bounds in the case of the overdamped Langevin process, for which we are able to compare convergence to the invariant measure and sensitivity to the parameter.
\end{abstract}

\selectlanguage{french}

\begin{abstract}
    Nous analysons la sensibilité des solutions de l'équation de Fokker-Planck par rapport à un paramètre inconnu. Notre principal résultat est de fournir des bornes supérieures quantitatives pour la distance de Wasserstein d'ordre $p$ $\W_p$ entre deux solutions correspondant à des paramètres différents. Nous donnons deux preuves de ce résultat, une première reposant sur un couplage synchrone entre deux solutions à une EDS, et une autre qui repose sur la différentiation de la formulation duale de Kantorovitch du transport optimal. Nous considérons aussi le processus de Langevin dans un second temps, pour lequel nous prouvons des bornes légerement différentes afin de comparer convergence vers une mesure invariante et sensibilité au paramètre. 
\end{abstract}

\section*{Version française abrégée}

Nous considérons une équation de Fokker-Planck dépendant d'un paramètre $a \in \R^p$, avec un drift $b : \R^d \times \R^p \rightarrow \R^d$ et un coefficient de diffusion $A : \R^d \times \R^p \rightarrow \mathbb{S}_{++}^{d}(\R)$,

\begin{equation}
\begin{cases}
    \partial_t \rho(t,x,a) =  \nabla_x \cdot \left(A(x,a) \nabla_x \rho(t,x,a) \right) - \nabla_x  \cdot \left(\rho(t,x,a) b(x,a)  \right), \\
    \rho(0,x,a) = \rho_{0} (x, a).
\end{cases}
\end{equation}

Ici, $a \in \R^p$ doit être compris comme un paramètre influençant la solution de l'équation (typiquement un coefficient de diffusion, un drift constant ou une perturbation de la condition initiale). Notre objectif est d’étudier l’impact d’une variation de ce paramètre sur la solution correspondante. Nous nous intéressons donc à la dépendance du flot de cette EDP vis-à-vis de ces paramètres, que l'on mesure à l’aide de la distance de Wasserstein. Ce problème a été largement étudié pour des paramètres aléatoires dans \cite{ERNST}, où les auteurs parviennent à montrer des résultats de continuité. Dans le cas déterministe, certains résultats ont été obtenus pour d’autres EDP dans un cadre plus spécifique (par exemple dans \cite{CARLES} où les auteurs étudient la dépendance à l’exposant de non-linéarité de l’équation de Schrödinger) ou pour l’équation de Fokker-Planck mais par rapport à la norme $\mathrm{L}^2$ (voir par exemple \cite{DAVID}, \cite{ROY} ou encore \cite{KUIPER}). Notre contribution principale consiste donc à réaliser une telle analyse en utilisant la distance de Wasserstein. 
\medbreak
On suppose par la suite que $b$ et $A$ sont des fonctions Lipschitz telles que $\nabla_x \cdot A$ soit aussi Lipschitz, et qu'elles vérifient

\begin{gather*}
    |b(x,a)| + \|A(x,a)\|_F \leq C(1+ |x|), \\
    |b(x,a) - b(x',a')| + \|A(x,a) - A(x',a')\|_F \leq L_1 \left( |x-x'| + |a-a'|\right), \\
    | \nabla_x \cdot A(x,a) - \nabla_x \cdot A(x', a') | \leq L_2 \left( |x -x'| + |a-a'| \right),
\end{gather*}

de sorte à ce que \eqref{FK} soit bien posée. Nous supposons de plus que 

\begin{equation*}
    \lambda_{\min}(A(x,a) \geq m,
\end{equation*}

où $\| \cdot \|_F$ est la norme de Frobenius matricielle et $\lambda_{\min}(A(x,a))$ désigne la plus petite valeur propre de $A(x,a)$. Nous parvenons sous ces hypothèses à établir le 

\begin{theoreme}
Soient $p \in [2, + \infty)$ et $a, a' \in \R^p$. Il existe $C_{1,d,p} > 0$ et $C_{2,d,p}> 0$ deux constantes explicites telles que

\begin{equation}
    \W_p^p(\rho(t, \cdot ,a'), \rho(t, \cdot, a)) \leq \W_p^p(\rho_0(\cdot ,a'), \rho_0(\cdot, a)) e^{ C_{1,d,p}t} + \frac{C_{2,d,p}}{C_{1,d,p}} \left( e^{ C_{1,d,p} t} - 1 \right) |a' - a |^p
\end{equation}
\end{theoreme}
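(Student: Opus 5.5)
La preuve se fera par couplage synchrone. On écrit la solution de l'équation comme la loi d'un processus de diffusion. Avec $\sigma(x,a) = \sqrt{2A(x,a)}$ (racine carrée symétrique définie positive), l'équation est l'équation de Kolmogorov forward associée à l'EDS
\begin{equation*}
\dr X_t^a = \big(b(X_t^a,a) + \nabla_x\cdot A(X_t^a,a)\big)\,\dr t + \sigma(X_t^a,a)\,\dr B_t .
\end{equation*}
En effet, $\nabla_x\cdot(A\nabla\rho) = \sum_{i,j}\partial_{ij}(A_{ij}\rho) - \nabla\cdot(\rho\,\nabla\cdot A)$, ce qui explique la correction de drift. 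On choisit un couplage optimal $(X_0^{a},X_0^{a'})$ de $(\rho_0(\cdot,a),\rho_0(\cdot,a'))$ pour $\W_p$, et on pilote les deux EDS par le même mouvement brownien $B$. Comme la loi de $(X_t^a,X_t^{a'})$ est un couplage de $(\rho(t,\cdot,a),\rho(t,\cdot,a'))$, on obtient $\W_p^p(\rho(t,\cdot,a'),\rho(t,\cdot,a)) \le \mathbb{E}|X_t^{a'}-X_t^a|^p =: u(t)$. L'existence et l'unicité forte sont assurées par les hypothèses Lipschitz et de croissance linéaire. Il reste à vérifier la régularité Lipschitz de $\sigma$. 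L'application racine carrée est Lipschitz sur $\{S \succeq 2m I\}$, avec une constante d'ordre $1/(2\sqrt{2m})$ en norme de Frobenius, grâce à l'inégalité $\|\sqrt{S}-\sqrt{T}\|_F \le \|S-T\|_F/(\lambda_{\min}(\sqrt S)+\lambda_{\min}(\sqrt T))$. On en déduit $\|\sigma(x,a)-\sigma(x',a')\|_F \le \frac{L_1}{\sqrt{2m}}\,(|x-x'|+|a-a'|)$. C'est précisément là qu'intervient l'hypothèse d'ellipticité uniforme.

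On applique ensuite la formule d'Itô à $f(z)=|z|^p$, qui est $C^2$ pour $p\ge2$, avec $Z_t = X_t^{a'}-X_t^a$. On a $\nabla f = p|z|^{p-2}z$ et $\|\nabla^2 f\|\le p(p-1)|z|^{p-2}$. Notons $\Delta b$ et $\Delta\sigma$ les différences des drifts corrigés et des diffusions. Après avoir pris l'espérance, la martingale locale disparaît par un argument de localisation, les moments étant finis grâce à la croissance linéaire. On obtient
\begin{equation*}
u'(t) \le \mathbb{E}\Big[p|Z_t|^{p-1}(L_1+L_2)\big(|Z_t|+|a'-a|\big) + \tfrac{p(p-1)}{2}|Z_t|^{p-2}\tfrac{L_1^2}{2m}\big(|Z_t|+|a'-a|\big)^2\Big].
\end{equation*}
On contrôle alors les termes croisés par l'inégalité de Young. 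On utilise $|Z|^{p-1}|\delta| \le \frac{p-1}{p}|Z|^p + \frac1p|\delta|^p$, puis $|Z|^{p-2}(|Z|+|\delta|)^2 \le 2|Z|^p + 2|Z|^{p-2}|\delta|^2$, et enfin $|Z|^{p-2}|\delta|^2\le \frac{p-2}{p}|Z|^p+\frac2p|\delta|^p$. On aboutit à l'inégalité différentielle $u' \le C_{1,d,p}\,u + C_{2,d,p}|a'-a|^p$, avec des constantes explicites en $p, L_1, L_2, m$. La dépendance en $d$ n'apparaît que si l'on passe de la norme d'opérateur à la norme de Frobenius.

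Le lemme de Grönwall, appliqué à $u$ avec $u(0)=\W_p^p(\rho_0(\cdot,a'),\rho_0(\cdot,a))$ grâce au choix d'un couplage initial optimal, donne $u(t) \le u(0)e^{C_1t} + \frac{C_2}{C_1}(e^{C_1t}-1)|a'-a|^p$. Cela conclut via $\W_p^p\le u(t)$. Le principal obstacle me semble être le contrôle Lipschitz de $\sigma=\sqrt{2A}$, à la fois en $x$ et en $a$, ainsi que l'identification rigoureuse de la loi de l'EDS avec la solution de l'EDP. Pour ce dernier point, on peut invoquer l'unicité des solutions mesures de Fokker-Planck sous coefficients Lipschitz, par exemple via le principe de superposition. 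Les calculs d'Itô et de Young sont en revanche routiniers.
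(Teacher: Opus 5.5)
Your proposal is correct and is essentially the paper's second proof. Both arguments use a synchronous coupling of the SDEs with corrected drift $b+\nabla_x\cdot A$ started from an optimal $\W_p$-coupling, then Itô's formula for $|X_t-X'_t|^p$ with the Hessian bounded by $p(p-1)|z|^{p-2}$, the Sylvester-type Lipschitz bound on the matrix square root (where $\lambda_{\min}(A)\ge m$ enters), Young's inequality and Grönwall. The only difference is a constant: your bound $\|\sigma(x,a)-\sigma(x',a')\|_F^2\le \frac{L_1^2}{2m}(|x-x'|+|a-a'|)^2$ keeps the cross term that the paper's estimate $\frac{L_1^2}{4m}(|x-x'|^2+|a-a'|^2)$ for $\sqrt{A}$ drops, so your diffusion contributions to $C_{1,d,p},C_{2,d,p}$ come out twice the paper's, which is harmless and is what the Lipschitz hypothesis on $A$ actually gives.
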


Nous prouvons ce résultat de deux manières différentes. La première preuve se base sur la formulation duale de la distance $\W_p$, en écrivant

\begin{equation*}
    \W_p^p \left(\rho(t, \cdot, a), \rho(t, \cdot, a') \right) = \sup_{\substack{h_{a,t}, h_{a',t} \in C_b(\R^d) \\ h_{a,t}(x) + h_{a',t}(y) \leq |x-y|^p} } \int_{\R^d} h_{t, a}(x) \rho_{t,a}(\dr x) + \int_{\R^d} h_{t, a'}(y) \rho_{t,a'}(\dr y),
\end{equation*}

puis en dérivant cette relation en utilisant un théorème d'enveloppe. La deuxième preuve se base sur un couplage synchrone entre deux EDS, c'est-à-dire via

\begin{gather*}
    \dr X_t = b(X_t, a) \dr t + \sigma(X_t, a) \dr B_t, \\
    \dr X'_t = b(X'_t, a') \dr t + \sigma(X'_t, a') \dr B_t,
\end{gather*}

où $\sigma$ est la racine carrée de $A$. En couplant les conditions intiales de manière optimale (au sens de $\W_p$) et en considérant $|X_t - X'_t|^p$, nous prouvons le résultat de manière équivalente. Dans un second temps, nous appliquons cette méthode au cas du processus de Langevin, afin de comparer la convergence vers une mesure stationnaire et l'écart dû à des paramètres différents. Plus précisément, nous considérons les EDS

\begin{gather*}
    \dr X_t = - \nabla_x V(t, X_t, a) \dr t + \sqrt{\frac{2}{\beta}} \dr B_t, \\
    \dr X'_t = - \nabla_x V(t, X'_t, a') \dr t + \sqrt{\frac{2}{\beta'}} \dr B_t,
\end{gather*}

où on suppose que 

\begin{gather*}
    \langle \nabla_x V(x,a) - \nabla_x V(y,a), x - y \rangle \geq m |x - y|^2, \\
    |\nabla_x V(x,a) - \nabla_x V (x, a') | \leq L_3 |a- a'|.
\end{gather*}

Sous ces hypothèses, nous montrons le

\begin{theoreme}
Soient $p [2, + \infty)$, $a, a' \in \R^p$ et $\beta, \beta'>0$. Il existe $\lambda >0$, $K_{1,d,p}, K_{2,d,p} >0$ telles que 

\begin{multline}
    \W_p^p(\rho(t, \cdot, a, \beta), \rho(t, \cdot, a', \beta')) \leq \W_p^p(\rho_0( \cdot, a, \beta), \rho_0(\cdot, a', \beta')) e^{- \lambda t} \\
    + \left( 1 - e^{- \lambda t} \right) \left( K_{1,d,p}|a - a'|^p + K_{2,d,p} \left|\sqrt{\frac{2}{\beta}} -  \sqrt{\frac{2}{\beta'}} \right|^p \right).
\end{multline}
\end{theoreme}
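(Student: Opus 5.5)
On utilise un couplage synchrone, comme dans la seconde preuve du théorème précédent, mais on exploite cette fois la forte convexité de $V$ pour obtenir une contraction. On prend un couple $(X_0,X'_0)$ de loi un couplage optimal pour $\W_p(\rho_0(\cdot,a,\beta),\rho_0(\cdot,a',\beta'))$. On pilote ensuite les deux EDS par le même mouvement brownien $B_t$. On note $Z_t = X_t - X'_t$ et $\delta = \sqrt{2/\beta}-\sqrt{2/\beta'}$. On a alors
\begin{equation*}
\dr Z_t = -\left(\nabla_x V(X_t,a) - \nabla_x V(X'_t,a')\right)\dr t + \delta\, \dr B_t .
\end{equation*}
Puisque $(X_t,X'_t)$ est un couplage de $\rho(t,\cdot,a,\beta)$ et $\rho(t,\cdot,a',\beta')$, on a $\W_p^p(\rho(t,\cdot,a,\beta),\rho(t,\cdot,a',\beta')) \leq \mathbb{E}|Z_t|^p$. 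Il suffit donc de majorer $f(t)=\mathbb{E}|Z_t|^p$.

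On applique la formule d'Itô à $|z|^p$, qui est $C^2$ pour $p\geq 2$. Son gradient vaut $p|z|^{p-2}z$ et son hessien $p|z|^{p-2}I + p(p-2)|z|^{p-4}zz^T$, de trace au plus $p(d+p-2)|z|^{p-2}$. On écrit
\begin{equation*}
\nabla_x V(X_t,a)-\nabla_x V(X'_t,a') = \left(\nabla_x V(X_t,a)-\nabla_x V(X'_t,a)\right) + \left(\nabla_x V(X'_t,a)-\nabla_x V(X'_t,a')\right).
\end{equation*}
La forte monotonie contrôle le premier terme et la condition de Lipschitz en $a$ contrôle le second. Après avoir tué la martingale (par localisation, en utilisant que les moments sont finis), on obtient
\begin{equation*}
f'(t) \leq -pm\,\mathbb{E}|Z_t|^p + pL_3|a-a'|\,\mathbb{E}|Z_t|^{p-1} + \tfrac{p(d+p-2)}{2}\delta^2\,\mathbb{E}|Z_t|^{p-2}.
\end{equation*}

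On absorbe ensuite les termes d'ordre inférieur par l'inégalité de Young. Pour $\varepsilon>0$, on a $|Z|^{p-1}|a-a'| \leq \varepsilon\frac{p-1}{p}|Z|^p + \frac{1}{p}\varepsilon^{-(p-1)}|a-a'|^p$. De même, $|Z|^{p-2}\delta^2 \leq \varepsilon\frac{p-2}{p}|Z|^p + \frac{2}{p}\varepsilon^{-(p-2)/2}|\delta|^p$. On choisit $\varepsilon$ assez petit pour que les deux contributions en $|Z|^p$ valent au plus $pm/2$, et on pose $\lambda = pm/2$. On obtient l'inégalité différentielle
\begin{equation*}
f'(t) \leq -\lambda f(t) + \lambda\left(K_{1,d,p}|a-a'|^p + K_{2,d,p}|\delta|^p\right),
\end{equation*}
avec des constantes explicites $K_{1,d,p}$, $K_{2,d,p}$ dépendant de $m$, $L_3$, $d$ et $p$. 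Le lemme de Grönwall donne alors $f(t)\leq f(0)e^{-\lambda t} + (1-e^{-\lambda t})(K_{1,d,p}|a-a'|^p+K_{2,d,p}|\delta|^p)$. Comme $f(0)=\W_p^p(\rho_0(\cdot,a,\beta),\rho_0(\cdot,a',\beta'))$ par optimalité du couplage initial, c'est exactement l'énoncé.

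Le point le plus délicat est la justification rigoureuse de l'étape d'Itô. Il faut vérifier que l'intégrale stochastique $\int p|Z_s|^{p-2}Z_s\cdot\delta\,\dr B_s$ est une vraie martingale et que $f$ est dérivable, ou au moins travailler sous forme intégrale. Pour cela, on commence par établir des bornes uniformes sur les moments d'ordre $p$ de $X_t$ et $X'_t$ via la dissipativité. On localise ensuite par des temps d'arrêt $\tau_n$ et on passe à la limite par Fatou. Le cas $p=2$ se traite de la même façon, le terme $|Z|^{p-2}$ devenant simplement une constante.
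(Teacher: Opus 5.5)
Your proposal is correct and follows essentially the same route as the paper: a synchronous coupling started from an optimal initial coupling, It\^o's formula for $|X_t-X'_t|^p$, and the splitting of $\nabla_x V(X_t,a)-\nabla_x V(X'_t,a')$ so that strong monotonicity gives $-pm\,\mathbb{E}|Z_t|^p$, followed by Young's inequality to absorb the $|Z_t|^{p-1}$ and $|Z_t|^{p-2}$ terms and Gr\"onwall with $\lambda = pm/2$ (the $1/\lambda$ factor from Gr\"onwall being absorbed into your $K_{1,d,p},K_{2,d,p}$). Two points go beyond the paper: your localization/Fatou justification of the It\^o step is something the paper skips, and your Young splitting correctly leaves a nonzero $|\delta|^p$ coefficient at $p=2$ (equal to $d$ in the differential inequality), whereas the paper's explicit $K_{2,d,p}$ vanishes at $p=2$ --- this is harmless for the existence statement, but you should not copy that constant.
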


\selectlanguage{english}

\section{Background and motivations}
When studying the flow of an ODE it is natural to estimate the
variations of the solution with respect to its initial value or its
parameters. While the former has been extensively studied for many PDE such as Fokker-Planck, McKean-Vlasov, porous medium equation or
Keller-Segel equations in Wasserstein metric spaces, we are not aware of systematic studies concerning the sensibility of PDE with respect to parameters in Wasserstein metric spaces. Here, we focus on the linear Fokker-Planck equation to address its
parameter sensibility in $\W_p$ for $p$ in $[2,\infty)$. We thus consider the linear Fokker-Planck equation on $\R^d$, depending on a parameter $a \in \R^p$, with a $b : \R^d \times \R^p \rightarrow \R^d$ and a diffusion coefficient $A : \R^d \times \R^p \rightarrow \mathbb{S}_{++}^{d}(\R)$, where $\mathbb{S}_{++}^{d}(\R)$ is the set of symmetric positive definite matrix of dimension $d$.

\begin{equation}
\begin{cases}
    \partial_t \rho(t,x,a) =  \nabla_x \cdot \left(A(x,a) \nabla_x \rho(t,x,a) \right) - \nabla_x  \cdot \left(\rho(t,x,a) b(x,a)  \right), \\
    \rho(0,x,a) = \rho_{0} (x, a).
\end{cases}
    \label{FK}
\end{equation}

Here $a \in\R^p$ has to be understood as some parameter that influences the process (typically a diffusion coefficient, a constant drift or a perturbation of the initial condition). Our goal is to investigate how a change in this parameter influences the corresponding solution to \eqref{FK}. We are thus interested in the dependence of the flow of this PDE upon intrinsic parameters, with respect to the Wasserstein distance. This problem has been extensively studied for random parameters in \cite{ERNST}, showing continuity results, where the authors prove Hölder continuity results for random diffusions. In the deterministic case, some results have been proven for other PDEs in a more specific setting (see e.g. \cite{CARLES} where the authors study the dependency upon the non-linearity exponent of the Schrödinger equation) or for the Fokker-Planck equation but with respect to $\mathrm{L}^2$ norm (for instance in \cite{ARNOLD}, \cite{ROY} or \cite{KUIPER}). Our main contribution is to perform such an analysis with the Wasserstein distance.
\medbreak
Throughout this paper, we assume that $b$ and $A$ are Lipschitz functions such that $\nabla_x \cdot A$ is also Lipschitz, and that they satisfy

\begin{gather*}
    |b(x,a)| + \|A(x,a)\|_F \leq C(1+ |x|), \\
    |b(x,a) - b(x',a')| + \|A(x,a) - A(x',a')\|_F \leq L_1 \left( |x-x'| + |a-a'|\right), \\
    | \nabla_x \cdot A(x,a) - \nabla_x \cdot A(x', a') | \leq L_2 \left( |x -x'| + |a-a'| \right),
\end{gather*}

for some constants $C, L_1, L_2>0$ and where $\| \cdot \|_F$ is the Frobenius norm on matrices, so that \eqref{FK} is well-posed. We additionally assume that 

\begin{equation*}
    \lambda_{\min}(A(x,a) \geq m,
\end{equation*}

where $m$ are some positive constants and $\lambda_{\min}(A(x,a))$ stands for the smallest eigenvalue of $A(x,a)$. In order to compare two solutions, we will rely on the Wasserstein distance of order $p$, defined, for $\mu,\mu' \in \mathcal{P}(\R^d)$ with finite $p$-moments, as

\begin{equation}\label{WP}
    \begin{split}
    \W_p(\mu, \mu') &= \left( \inf_{\pi \in \Pi(\mu, \mu')} \int_{\R^d \times \R^d} \left| x- x' \right|^p \pi(\dr x,\dr x') \right)^\frac{1}{p}\\
    & = \left( \sup_{\substack{h, h' \in C_b(\R^d) \\ h(x) + h'(x') \leq |x-x'|^p} } \int_{\R^d} h(x) \mu(\dr x) + \int_{\R^d} h'(x') \mu'(\dr x') \right)^\frac{1}{p},
    \end{split}
\end{equation}

where $\Pi(\mu, \mu')$ is the set of coupling between $\mu$ and $\mu'$ and $C_b(\R^d)$ is the set of bounded continuous functions from $\R^d$ to $\R$. We are able to give an upper bound on the $\W_p$ distance between two solutions of \eqref{FK} with different parameters $a$ and $a'$. Our main result is the following theorem, for which we give two different proofs in the next sections.

\begin{theorem} \label{MAINTH}
Let $p \in [2,+\infty) $, $a, a' \in \R^p$ and $\beta, \beta'>0$. There exist $C_{1,d,p}, C_{2,d,p} >0$ two explicit constants such that 

\begin{equation}
    \W_p^p(\rho(t, \cdot ,a'), \rho(t, \cdot, a)) \leq \W_p^p(\rho_0(\cdot ,a'), \rho_0(\cdot, a)) e^{C_{1,d,p}t} + \frac{C_{2,d,p}}{C_{1,d,p}} \left( e^{C_{1,d,p} t} - 1 \right) |a' - a |^p.
\end{equation}

\end{theorem}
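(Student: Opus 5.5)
We prove the bound by a synchronous coupling argument. Rewrite \eqref{FK} in non-divergence form: $\nabla_x\cdot(A\nabla_x\rho) = \sum_{i,j}\partial_{ij}(A_{ij}\rho) - \nabla_x\cdot(\rho\, \nabla_x\cdot A)$. Hence $\rho(t,\cdot,a)$ is the law of the solution of
\begin{equation*}
\dr X_t = \tilde b(X_t,a)\,\dr t + \sigma(X_t,a)\,\dr B_t, \qquad \tilde b = b + \nabla_x\cdot A, \quad \sigma = (2A)^{1/2}.
\end{equation*}
Let $X'_t$ be the analogous solution with parameter $a'$, driven by the \emph{same} Brownian motion $B$. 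The initial pair $(X_0,X'_0)$ is chosen independent of $B$ and distributed according to an optimal $\W_p$ coupling of $\rho_0(\cdot,a)$ and $\rho_0(\cdot,a')$. It exists because the $p$-th moments are finite. These moments also propagate by the linear growth assumption.

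The drift $\tilde b$ is Lipschitz in $(x,a)$ with constant $L_1+L_2$. The key preliminary is that $\sigma$ is Lipschitz in $(x,a)$ as well. We use the standard square-root estimate for matrices whose spectrum is bounded below by $m>0$: $\|A^{1/2}-B^{1/2}\|_F \le \frac{1}{2\sqrt m}\|A-B\|_F$. One way to see this is to write $A^{1/2}-B^{1/2}$ as the solution $S$ of the Sylvester equation $A^{1/2}S+SB^{1/2}=A-B$. It follows that $\|\sigma(x,a)-\sigma(x',a')\|_F \le \frac{L_1}{\sqrt{2m}}(|x-x'|+|a-a'|) =: L_\sigma(|x-x'|+|a-a'|)$. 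This is where the ellipticity assumption $\lambda_{\min}(A)\ge m$ is used.

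Set $Z_t = X_t - X'_t$ and $f(z)=|z|^p$, which is $C^2$ since $p\ge2$. Recall $\nabla f = p|z|^{p-2}z$ and $\|\nabla^2 f\| \le p(p-1)|z|^{p-2}$. Itô's formula gives
\begin{equation*}
\dr |Z_t|^p = p|Z_t|^{p-2}\langle Z_t, \Delta\tilde b_t\rangle \dr t + \tfrac{1}{2}\mathrm{tr}\bigl(\Delta\sigma_t^{\top}\nabla^2 f(Z_t)\Delta\sigma_t\bigr)\dr t + \dr M_t,
\end{equation*}
where $\Delta\tilde b_t=\tilde b(X_t,a)-\tilde b(X'_t,a')$ and $\Delta\sigma_t=\sigma(X_t,a)-\sigma(X'_t,a')$. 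The local martingale $M_t$ is removed by localization with stopping times, followed by Fatou's lemma using the moment bounds.

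Bound the drift term by $p(L_1+L_2)(|Z|^p+|Z|^{p-1}|a-a'|)$ and the Itô term by $\frac{p(p-1)}{2}L_\sigma^2|Z|^{p-2}(|Z|+|a-a'|)^2$. Young's inequality, $|Z|^{p-k}|a-a'|^k \le \frac{p-k}{p}|Z|^p+\frac{k}{p}|a-a'|^p$, then yields
\begin{equation*}
\frac{\dr}{\dr t}\mathbb{E}|Z_t|^p \le C_{1,d,p}\,\mathbb{E}|Z_t|^p + C_{2,d,p}|a-a'|^p.
\end{equation*}
The constants are explicit in $p$, $L_1$, $L_2$, $m$. The dimension enters only if one passes between operator and Frobenius norms, for instance in the square-root lemma.

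Grönwall's lemma gives $\mathbb{E}|Z_t|^p \le \mathbb{E}|Z_0|^p e^{C_1t} + \frac{C_2}{C_1}(e^{C_1t}-1)|a-a'|^p$. Finally, $(X_t,X'_t)$ is a coupling of $\rho(t,\cdot,a)$ and $\rho(t,\cdot,a')$, so $\W_p^p(\rho(t,\cdot,a),\rho(t,\cdot,a')) \le \mathbb{E}|Z_t|^p$. Combined with $\mathbb{E}|Z_0|^p=\W_p^p(\rho_0(\cdot,a),\rho_0(\cdot,a'))$, this proves the theorem.

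The main obstacle is the Lipschitz control of the diffusion square root $\sigma$ in the parameter $a$. Without uniform ellipticity, $A^{1/2}$ is only $1/2$-Hölder, and the $|a-a'|^p$ rate would degrade. A secondary technical point is justifying the identification of the law of $X_t$ with the solution of \eqref{FK}, which relies on uniqueness for the Fokker-Planck equation under the stated Lipschitz and growth assumptions.
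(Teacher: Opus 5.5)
Your proposal is correct and is essentially the paper's second proof (Section 3). Both use a synchronous coupling of the SDEs with drift $b+\nabla_x\cdot A$ and diffusion $\sqrt{2A}$, started from an optimal $\W_p$ coupling of the initial data, then It\^o's formula for $|X_t-X'_t|^p$, Lipschitz control of $\sqrt{A}$ through uniform ellipticity, Young's inequality and Gr\"onwall. Your small variations are slightly cleaner: you apply the Sylvester-equation estimate directly to $A(x,a)^{1/2}-A(x',a')^{1/2}$ instead of differentiating in $a$, and you justify removing the martingale by localization. They also give the correct factor $(|x-x'|+|a-a'|)^2$ where the paper writes $|x-x'|^2+|a-a'|^2$, which changes only the explicit constants.
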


\begin{remark}
    In the case where $ a = a'$ (i.e. when the only uncertainty lies on the intial condition), we recover estimates that have been widely studied in the litterature (in \cite{VILLANI} for example).
\end{remark}

\begin{remark}
One of the main inconvenient of this result is that it fails to encompass the case $p \in [1,2)$ and $p= \infty$. For the case $p=1$, one may try to adapt the ideas of \cite{EBERLE}, where the author studies exponential decay w.r.t. to the $\W_1$ norm by using reflection coupling (instead of the synchronous coupling we will use thereafter). For $p =\infty$, the fact that our bound blows up when taking the $p$-th root and letting $p$ go to infinity is more concerning, which leads us to believe that the result is false when $p =\infty$.
\end{remark}

\begin{ex}
For $p=2$, most of the expressions are much simpler. Indeed, if we assume that $A$ does not depend on $x$ and that $b=0$, in this case, one simply obtains that

\begin{equation*} 
    \W_2^2(\rho(t, \cdot ,a'), \rho(t, \cdot, a)) \leq \W_2^2(\rho_0(\cdot ,a'), \rho_0(\cdot, a)) +  \frac{L_1^2}{2m} |a-a'|^2 t.
\end{equation*}

Thus, in this simplified setting, two solutions only differ by a affine in time function.
\end{ex}

\section{A first proof of Theorem 1.1. using Kantorovitch's dual formulation}

We rely on the following differential inequality, which was proven in \cite[Theorem 4.1.]{AMBROSIO} for instance, and used for many applications thereafter (see e.g. \cite{DAVID}). Recall that there exist $h_{a,t}$ and $h_{a', t}$ such that the supremum in \eqref{WP} (for the distance between $\rho(t, \cdot a)$ and $\rho(t, \cdot, a')$) is attained, and $\pi_{a, a'}$ an optimal coupling such that the infinimum is attained too. In the follwing, we write $\pi$ instead of $\pi_{a,a'}$ for the sake of clarity. It follows that

\begin{equation*}
\begin{split}
    \dfrac{\mathrm{d} }{\mathrm{d}t}\mathcal{W}_p^p(\rho(t, \cdot ,a'), \rho(t, \cdot, a))  \leq & \int_{\mathbb{R}^d} h_{a',t}(x') \partial_t \rho_{a',t} (\dr x') + \int_{\mathbb{R}^d} h_{a,t}(x) \partial_t \rho_{a,t}(\dr x) \\
 = &  \int_{\mathbb{R}^d \times \mathbb{R}^d} (\nabla_x \cdot A(x',a')) \cdot \nabla_x h_{a',t}(x') + (\nabla_x \cdot A(x,a)) \cdot \nabla_x h_{a,t}(x)   \pi(\,\mathrm{d}x', \,\mathrm{d}x) \\
& + \int_{\mathbb{R}^d \times \mathbb{R}^d} A(x',a'):\nabla_x^2 h_{a',t}(x) + A(x,a):\nabla_x^2 h_{a,t}(x) \pi(\,\mathrm{d}x', \,\mathrm{d}x) \\
&+ \int_{\mathbb{R}^d \times \mathbb{R}^d} b (x',a') \cdot \nabla_x h_{a'}(x') + b(x,a) \cdot \nabla_x h_{a}(x) \pi(\,\mathrm{d}x', \,\mathrm{d}x),
\end{split}
\end{equation*}

the last equality coming from an integration by part. On the support of $\pi$, the constraint $h_{a,t}(x) + h_{a',t}(x') \leq |x-y|^p$ is actually an equality and optimality conditions yield, on the support of $\pi$,

\begin{equation*}
    \begin{cases}
    h_{a',t}(x') + h_{a,t}(x) = | x' - x|^{p} \\
    \nabla_x h_{a',t}(x') = p|x'- x|^{p-2}(x' - x) \\
    \nabla_x h_{a,t}(x) = p |x'-x|^{p-2}(x - x') \\
    \begin{bmatrix}
    \nabla_x^2 h_{a,t}(x)& 0 \\
    0 & \nabla_x^2 h_{a',t}(x') 
    \end{bmatrix}
    \leq 
    \begin{bmatrix}
    M(x',x) & -M(x',x) \\
    - M(x',x) & M(x',x)
    \end{bmatrix},
    \end{cases}
\end{equation*}

where $M$ is given by

\begin{equation*}
    M(x',x) = p(p-2) |x' - x|^{p-4} (x'-x)(x'-x)^T + p|x' - x|^{p-2} I_d.
\end{equation*}

Since the class of symetric positive semi-definite matrices is stable by multiplication, we can multiply our last condition by the matrix $
\begin{bmatrix}
A(x,a) & \sqrt{A(x,a)} \sqrt{A(x',a')} \\
\sqrt{A(x',a')} \sqrt{A(x,a)} & A(x',a')
\end{bmatrix}
$  and take its trace, so that, on the support of $\pi$,

\begin{equation*}
\begin{split}
        A(x',a'):\nabla_x^2 h_{a'}(x) + A(x,a):\nabla_x^2 h_{a}(x) \leq &
    \text{Tr}(A(x',a') M(x',x) ) + \text{Tr}( A(x,a) M(x',x)) \\
    &- 2\text{Tr}\left(\sqrt{A(x,a)} \sqrt{A(x',a')} M(x',x)\right) \\
    & = \text{Tr}\left( \left( \sqrt{A(x',a')} - \sqrt{A(x,a)}\right)^2 M(x',x)\right) \\
    & \leq \| M(x,x') \|_{\text{op}} \text{Tr}\left( \left( \sqrt{A(x',a')} - \sqrt{A(x,a)}\right)^2 \right),
\end{split}
\end{equation*}

since $\text{Tr}(XY) \leq \|X\|_{\text{op}} \text{Tr}(Y)$, by using von Neumann's trace inequality (see for instance \cite{VONNEUMANN}), which states that 

\begin{equation*}
|\text{Tr}(XY)| \leq \sum_{i = 1}^d \sigma_i(X) \sigma_i(Y),
\end{equation*}

for every $d \times d$ complex matrices $X$ and $Y$ and where $\sigma_i(X)$ are the singular values of $X$. Since $p \geq 2$, $\| M(x,x') \|_{\text{op}} = p(p-1) |x - x'|^{p-2}$, so that we only have to handle 
\begin{equation*}
\text{Tr}\left( \left( \sqrt{A(x',a')} - \sqrt{A(x,a)}\right)^2 \right) = \left\| \sqrt{A(x',a')} - \sqrt{A(x,a)}\right\|_F^2. 
\end{equation*}

Let $B(x,a) = \sqrt{A(x,a)}$ so that $B^2(x,a) = A(x,a)$. Differentiating this equality with respect to $a$, we get that

\begin{equation*}
 \partial_a B(x,a) B(x,a) + B(x,a) \partial_a B(x,a) = \partial_a A(x,a).
\end{equation*}

It is then shown in \cite[Section 10]{BHATIA} that
\begin{equation*}
    \| \partial_{a} B(x,a) \|_F \leq \frac{\|\partial_a A(x,a)\|_F}{2 \lambda_{\min} (B(x,a) )} \leq \frac{L_1}{2 \sqrt{m}}.
\end{equation*}

It yields 

\begin{equation*}
    \| B(x,a') - B(x,a) \|_F^2 \leq |a' - a|^2 \int_0^1 \| \partial_a B(x, (1-t)a + ta') \|_F^2 \, \mathrm{d}t \leq \frac{L_1^2}{4 m} |a - a'|^2.
\end{equation*}

Applying the same method with respect to $x$ gives a similar bound, which, in the end, yields

\begin{equation*}
    \text{Tr}\left( \left( \sqrt{A(x',a')} - \sqrt{A(x,a)}\right)^2 \right) \leq \frac{L^2_1}{4m} \left( |x-x'|^2 + |a-a'|^2 \right).
\end{equation*}

Then, on the support of $\pi$,

\begin{equation*}
    A(x',a'):\nabla_x^2 h_{a'}(x) + A(x,a):\nabla_x^2 h_{a}(x) \leq \frac{L^2_1}{4m} p(p-1)  |x- x'|^p + \frac{L^2_1}{4m} p(p-1) |x - x'|^{p-2} |a - a'|^2.
\end{equation*}

We use Young's inequality on the second term, with coefficient $q = \frac{p}{2}$ and $q' = \frac{p}{p-2}$, so that 

\begin{equation} \label{FIRST}
     A(x',a'):\nabla_x^2 h_{a'}(x) + A(x,a):\nabla_x^2 h_{a}(x) \leq  \frac{L^2_1(p-1)}{2m} |a -a'|^p + \frac{L^2_1 (p-1)^2}{2m} |x -x'|^p.
\end{equation}

Our next step is to handle the drift term, by observing that, on the support of $\pi$, 

\begin{equation} \label{SECOND}
\begin{split}
     b (x',a') \cdot \nabla_x h_{a',t}(x') + b(x,a) \cdot \nabla_x h_{a,t}(x)   & =  p \langle b (x',a') - b(x,a), x' -x \rangle |x' -x|^{p-2}  \\
     &\leq p L_1 |a-a'| |x-x'|^{p-1} + p L_1 |x-x'|^p \\
     & \leq L_1 |a-a'|^p + L_1(2p-1) |x - x'|^p,
\end{split}
\end{equation}

where we used Cauchy-Schwarz inequality for the first inequality and Young's inequality for the second one. With the same method, we are also able to prove that

\begin{equation} \label{THIRD}
    (\nabla_x \cdot A(x',a')) \cdot \nabla_x h_{a',t}(x') + (\nabla_x \cdot A(x,a)) \cdot \nabla_x h_{a,t}(x) \leq L_2 |a -a'|^p + L_2(2p-1) |x-x'|^p.
\end{equation}

In the end, putting \eqref{FIRST}, \eqref{SECOND} and \eqref{THIRD} together, we get

\begin{equation}
    \dfrac{\mathrm{d} }{\mathrm{d}t}\mathcal{W}_p^p(\rho(t, \cdot ,a'), \rho(t, \cdot, a)) \leq C_{1,d,p} \W_p^p(\rho(t, \cdot ,a'), \rho(t, \cdot, a)) + C_{2,d,p} |a - a'|^p,
\end{equation}

with $C_{1,d,p} = (L_1 +L_2)(2p-1)+ \frac{L^2_1 (p-1)^2}{2m}$ and $C_{2,d,p} = L_1 + L_2 + \frac{L^2_1(p-1)}{2m}$. Using a straightforward variant of Grönwall's lemma gives the inequality of the theorem.

\section{A second proof of Theorem 1.1. using a synchronous coupling}

Let us consider $X_t$ and $X'_t$ solution to the SDEs
\begin{gather*}
    \dr X_t = \left( b(X_t,a) + \nabla \cdot A(X_t, a) \right) \dr t + \sqrt{2} \sigma(X_t, a) \dr B_t, \\
     \dr X'_t = \left( b(X'_t,a') \nabla \cdot A(X_t', a') \right) \dr t + \sqrt{2} \sigma(t, X'_t, a') \dr B_t,
\end{gather*}

where $\sigma(x,a) = \sqrt{A(x,a)}$, together with $(X_0, X_0')$ being an optimal coupling for the $p$-Wasserstein distance between $\rho_0(\cdot,a)$ and $\rho_0(\cdot,a')$. The subsequent coupling between $X_t$ and $X'_t$ is such that the underlying Brownian motion is the same in both dynamics. We now apply Ito's lemma to $|X_t - X'_t|^p$ for $p\geq2$, which yields

\begin{equation*}
\begin{split}
    \dr |X_t - X'_t|^p =& p (X_t - X'_t) \cdot (b(X_t,a) - b(X'_t, a')) |X_t - X'_t|^{p-2} \dr t \\
    & + p (X_t - X'_t) \cdot (\nabla \cdot A(X_t,a) - \nabla \cdot A(X'_t, a')) |X_t - X'_t|^{p-2} \\
     &+ p(p-2) \left|\left( \sigma(X_t,a) - \sigma(X'_t,a') \right)^T (X_t - X'_t) \right|^2 |X_t - X'_t|^{p-4} \dr t \\
     &+ p \| \sigma(X_t, a) - \sigma(X'_t, a')\|_F^2 |X_t - X'_t|^{p-2} \dr t \\
     & + p |X_t - X'_t|^{p-2} (X_t - X'_t)\cdot \left( \sigma(X_t,a) - \sigma(X'_t,a') \right) \dr B_t.
\end{split}
\end{equation*}

Taking the expectation gives

\begin{equation*}
\begin{split}
     \mathbb{E}\left[ |X_t - X'_t|^p \right] =& \mathbb{E}[|X_0 - Y_0|^p] + p \int_0^t \mathbb{E}\left[(X_s - X'_s) \cdot (b( X_s,a) - b(X'_s, a')) |X_s - X'_s|^{p-2} \right] \dr s \\
     &+  p \int_0^t \mathbb{E}\left[(X_s - X'_s) \cdot (\nabla \cdot A( X_s,a) - \nabla \cdot A(X'_s, a')) |X_s - X'_s|^{p-2} \right] \dr s\\
     &+ p(p-2)\int_0^t \mathbb{E}\left[\left|\left( \sigma(X_s,a) - \sigma(X'_s,a') \right)^T (X_s - X'_s) \right|^2 |X_s - X'_s|^{p-4} \right] \dr s \\
    & + p \int_0^t \mathbb{E}\left[ \| \sigma( X_s, a) - \sigma( X'_s, a')\|_F^2 |X_s - X'_s|^{p-2} \right] \dr s.
\end{split}
\end{equation*}

In particular,

\begin{equation*}
\begin{split}
    \frac{\dr}{\dr t} \mathbb{E}\left[ |X_t - X'_t|^p \right] =& p\mathbb{E}\left[(X_t - X'_t) \cdot (b( X_t,a) - b(X'_t, a')) |X_t - X'_t|^{p-2} \right] \\
    & + p \mathbb{E}\left[(X_t - X'_t) \cdot (\nabla \cdot A( X_t,a) - \nabla \cdot A(X'_t, a')) |X_t - X'_t|^{p-2} \right] \\
    & + p(p-2) \mathbb{E}\left[\left|\left( \sigma(X_t,a) - \sigma(X'_t,a') \right)^T (X_t - X'_t) \right|^2 |X_t - X'_t|^{p-4} \right] \\
    & + p \mathbb{E}\left[ \| \sigma( X_t, a) - \sigma( X'_t, a')\|_F^2 |X_t - X'_t|^{p-2} \right].
\end{split}
\end{equation*}

The first term and second term can be taken care of through

\begin{equation}\label{ONE}
\begin{split}
    p \mathbb{E}\left[(X_t - X'_t) \cdot (b( X_t,a) - b(X'_t, a')) |X_t - X'_t|^{p-2} \right] & \leq p L_1 \mathbb{E}\left[ |X_t - X'_t|^p \right] + pL_1 |a-a'| \mathbb{E}\left[ |X_t - X'_t|^{p-1} \right]\\
    & \leq L_1 |a-a'|^p + L_1(2p-1) \mathbb{E}\left[ |X_t - X'_t|^p \right],
\end{split}
\end{equation}

\begin{multline}\label{TWO}
    p \mathbb{E}\left[(X_t - X'_t) \cdot (\nabla \cdot A( X_t,a) - \nabla \cdot A(X'_t, a')) |X_t - X'_t|^{p-2} \right] \\
     \leq p L_2 \mathbb{E}\left[ |X_t - X'_t|^p \right] + pL_2 |a-a'| \mathbb{E}\left[ |X_t - X'_t|^{p-1} \right]\\
     \leq L_2 |a-a'|^p + L_2(2p-1) \mathbb{E}\left[ |X_t - X'_t|^p \right].
\end{multline}

Next, remark that 

\begin{equation*}
    \mathbb{E}\left[\left|\left( \sigma(X_t,a) - \sigma(X'_t,a') \right)^T (X_t - X'_t) \right|^2 |X_t - X'_t|^{p-4} \right] \leq  \mathbb{E}\left[ \| \sigma( X_t, a) - \sigma( X'_t, a')\|_F^2 |X_t - X'_t|^{p-2} \right],
\end{equation*}

so that 

\begin{multline*}
    p(p-2) \mathbb{E}\left[\left|\left( \sigma(X_t,a) - \sigma(X'_t,a') \right)^T (X_t - X'_t) \right|^2 |X_t - X'_t|^{p-4} \right] \\
    + p \mathbb{E}\left[ \| \sigma( X_t, a) - \sigma( X'_t, a')\|_F^2 |X_t - X'_t|^{p-2} \right] \\
    \leq p(p-1) \mathbb{E}\left[ \| \sigma( X_t, a) - \sigma( X'_t, a')\|_F^2 |X_t - X'_t|^{p-2} \right].
\end{multline*}

Using the previous bound on $\left\| \sqrt{A(x',a')} - \sqrt{A(x,a)}\right\|_F^2$, we also get

\begin{equation*}
    \mathbb{E}\left[ \| \sigma( X_t, a) - \sigma( X'_t, a')\|_F^2 |X_t - X'_t|^{p-2} \right] \leq \frac{L^2_1}{4m} \left( \mathbb{E}\left[ |X_t - X'_t|^p \right] + |a-a'|^2 \mathbb{E}\left[ |X_t - X'_t|^{p-2} \right] \right),
\end{equation*}

so that

\begin{equation}\label{THREE}
    p(p-1) \mathbb{E}\left[ \| \sigma( X_t, a) - \sigma( X'_t, a')\|_F^2 |X_t - X'_t|^{p-2} \right] \leq \frac{L^2_1(p-1)}{2m} |a -a'|^p + \frac{L^2_1 (p-1)^2}{2m} \mathbb{E}\left[ |X_t - X'_t|^p \right].
\end{equation}

Using \eqref{ONE}, \eqref{TWO} and \eqref{THREE} gives 

\begin{equation*}
    \frac{\dr}{\dr t} \mathbb{E}\left[ |X_t - X'_t|^p \right] \leq C_{1,d,p} \mathbb{E}\left[ |X_t - X'_t|^p \right] \dr s + C_{2,d,p} |a - a'|^p .
\end{equation*}

with $C_{1,d,p}$ and $C_{2,d,p}$ being the same constants as previously found. This is enough to conclude by Grônwall's inequality, since $\mathbb{E}[|X_0 - Y_0|^p] = \W_p^p(\rho_0(\cdot ,a'), \rho_0(\cdot, a))$ by construction.

\section{Further results for the overdamped Langevin process}

In this section, we consider a slight variation of our original PDE, in order to highlight what can happen when the underlying PDE converges towards a steady-state. We chose the overdamped Langevin process for the sake of clarity, but the method is reliable enough so that it can be adapted to various similar settings, such as developed in \cite{BOLLEY}. Let us consider 

\begin{equation}
\begin{cases}
    \partial_t \rho(t,x,a, \beta) =  \frac{1}{\beta} \Delta \rho(t,x,a, \beta) + \nabla_x  \cdot \left(\rho(t,x,a, \beta) \nabla_x V(x,a)  \right), \\
    \rho(0,x,a, \beta) = \rho_{0} (x, a, \beta).
\end{cases}
    \label{LANGEVIN}
\end{equation}

Observe that we chose our parameter to be the thermal energy $\beta>0$ and some parameter $a \in \R^p$ only influencing the potential $V$. We assume in the following that 

\begin{gather*}
    \langle \nabla_x V(x,a) - \nabla_x V(y,a), x - y \rangle \geq k |x - y|^2, \\
    |\nabla_x V(x,a) - \nabla_x V (x, a') | \leq L_3 |a- a'|,
\end{gather*}

for some $k, L_3 >0$. Under this set of hypothesis, we prove the following 

\begin{theorem}

Let $p \in [2,+ \infty)$, $a, a' \in \R^p$ and $\beta, \beta'>0$. There exist $\lambda, K_{1,d,p}, K_{2,d,p} >0$ such that 
\begin{multline}
    \W_p^p(\rho(t, \cdot, a, \beta), \rho(t, \cdot, a', \beta')) \leq \W_p^p(\rho_0( \cdot, a, \beta), \rho_0(\cdot, a', \beta')) e^{- \lambda t} \\
    + \frac{1}{\lambda} \left( K_{1,d,p}|a - a'|^p + K_{2,d,p} \left|\sqrt{\frac{2}{\beta}} -  \sqrt{\frac{2}{\beta'}} \right|^p \right) \left( 1 - e^{- \lambda t} \right).
\end{multline}
\end{theorem}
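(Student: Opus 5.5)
We follow the synchronous coupling argument of the previous section, now exploiting the strong convexity of $V$ to obtain a contraction rather than exponential growth. Let $(X_0,X'_0)$ be a $\W_p$-optimal coupling of $\rho_0(\cdot,a,\beta)$ and $\rho_0(\cdot,a',\beta')$, and drive both processes with the same Brownian motion:
\begin{gather*}
    \dr X_t = -\nabla_x V(X_t,a)\,\dr t + \sqrt{\tfrac{2}{\beta}}\,\dr B_t, \qquad
    \dr X'_t = -\nabla_x V(X'_t,a')\,\dr t + \sqrt{\tfrac{2}{\beta'}}\,\dr B_t .
\end{gather*}
Set $Z_t = X_t - X'_t$ and $\delta = \sqrt{2/\beta}-\sqrt{2/\beta'}$. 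Since the diffusion coefficients are constant, $\dr Z_t = -(\nabla_x V(X_t,a)-\nabla_x V(X'_t,a'))\,\dr t + \delta\,\dr B_t$.

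Applying It\^o's formula to $|Z_t|^p$ and taking expectations (the martingale term vanishes after a standard localization, using the moment bounds guaranteed by the Lipschitz/growth assumptions) gives
\begin{equation*}
    \frac{\dr}{\dr t}\mathbb{E}|Z_t|^p = -p\,\mathbb{E}\left[|Z_t|^{p-2}\langle Z_t, \nabla_x V(X_t,a)-\nabla_x V(X'_t,a')\rangle\right] + \frac{p}{2}\,\delta^2\, \mathbb{E}\left[ (d+p-2)|Z_t|^{p-2}\right],
\end{equation*}
since $\Delta |z|^p = p(d+p-2)|z|^{p-2}$. For the drift we write $\nabla_x V(X_t,a)-\nabla_x V(X'_t,a') = [\nabla_x V(X_t,a)-\nabla_x V(X'_t,a)] + [\nabla_x V(X'_t,a)-\nabla_x V(X'_t,a')]$. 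The convexity assumption bounds the first bracket's contribution by $-pk\,\mathbb{E}|Z_t|^p$, and the $L_3$ assumption bounds the second by $pL_3|a-a'|\,\mathbb{E}|Z_t|^{p-1}$.

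The key step is the choice of Young's inequality weights, so that the parasitic $|Z_t|^p$ terms consume only part of the dissipation $pk$. We use $pL_3|a-a'||z|^{p-1}\le \varepsilon_1|z|^p + c(\varepsilon_1,p)L_3^p|a-a'|^p$ with exponents $p/(p-1)$ and $p$. Similarly, $\frac{p(d+p-2)}{2}\delta^2|z|^{p-2}\le \varepsilon_2|z|^p + c'(\varepsilon_2,p,d)|\delta|^p$ with exponents $p/(p-2)$ and $p/2$; for $p=2$ this term is already the constant $(d)\delta^2$ and no Young inequality is needed. Choosing $\varepsilon_1=\varepsilon_2 = pk/4$ yields
\begin{equation*}
    \frac{\dr}{\dr t}\mathbb{E}|Z_t|^p \le -\lambda\,\mathbb{E}|Z_t|^p + K_{1,d,p}|a-a'|^p + K_{2,d,p}|\delta|^p,\qquad \lambda = \frac{pk}{2},
\end{equation*}
with explicit $K_{1,d,p}$ and $K_{2,d,p}$ read off from the Young constants.

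Gr\"onwall's lemma in the form $(e^{\lambda t}u)'\le e^{\lambda t}C$ then gives $\mathbb{E}|Z_t|^p\le \mathbb{E}|Z_0|^p e^{-\lambda t} + \frac{C}{\lambda}(1-e^{-\lambda t})$. Since $\mathbb{E}|Z_0|^p = \W_p^p$ of the initial data by optimality, and $\W_p^p$ of the time-$t$ laws is at most $\mathbb{E}|Z_t|^p$, this concludes. The main obstacle is the constant bookkeeping in the Young steps, in particular keeping $\lambda>0$ uniform while handling the $|z|^{p-2}$ diffusion mismatch term. A minor technical point is justifying the differentiation of $\mathbb{E}|Z_t|^p$, which follows from finiteness of $p$-th moments of the Langevin dynamics.
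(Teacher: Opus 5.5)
Your proposal is correct and follows essentially the same route as the paper: synchronous coupling from a $\W_p$-optimal initial coupling, It\^o's formula for $|X_t-X'_t|^p$, splitting the drift difference into a convexity part and an $L_3$ part, absorbing both parasitic terms by Young's inequality at weight $pk/4$ each so that $\lambda = pk/2$, and concluding by Gr\"onwall. Your separate treatment of $p=2$, where the mismatch term is simply $d\,\delta^2$, is in fact more careful than the paper: its explicit constant $K_{2,d,p} = (p-1) d \left( \frac{2 d (p-1)(p-2)}{k} \right)^{p/2}$ vanishes at $p=2$ and would wrongly drop that term.
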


\begin{proof}
We proceed as before, by using a coupling argument. We introduce

\begin{gather*}
    \dr X_t = - \nabla_x V(t, X_t, a) \dr t + \sqrt{\frac{2}{\beta}} \dr B_t, \\
    \dr X'_t = - \nabla_x V(t, X'_t, a') \dr t + \sqrt{\frac{2}{\beta'}} \dr B_t,
\end{gather*}

Applying Ito's lemma to $|X_t - X'_t|^p$ yields

\begin{equation*}
\begin{split}
    \dr |X_t - X'_t|^p =& - p (X_t - X'_t) \cdot (\nabla_x V(X_t,a) - \nabla_x V(X'_t, a')) |X_t - X'_t|^{p-2} \dr t \\
     &+ \frac{p(p-2+d)}{2} \left|\sqrt{\frac{2}{\beta}} -  \sqrt{\frac{2}{\beta'}} \right|^2 |X_t - X'_t|^{p-2} \dr t \\
     & + p |X_t - X'_t|^{p-2} \left(\sqrt{\frac{2}{\beta}} -  \sqrt{\frac{2}{\beta'}} \right) (X_t - X'_t)\cdot \dr B_t.
\end{split}
\end{equation*}

But, by convexity,

\begin{equation*}
    - p \mathbb{E}\left[ (X_t - X'_t) \cdot (\nabla_x V(X_t,a) - \nabla_x V(X'_t, a)) |X_t - X'_t|^{p-2} \right] \leq - p k \mathbb{E}\left[  |X_t - X'_t|^p\right],
\end{equation*}

and, since $\nabla_x V$ is Lipschitz with respect to $a$,

\begin{multline*}
     - p \mathbb{E}\left[ (X_t - X'_t) \cdot (\nabla_x V(X'_t,a) - \nabla_x V(X'_t, a')) |X_t - X'_t|^{p-2} \right] \\
      \leq p L_3  |a-a'|\mathbb{E}\left[ |X_t - X'_t|^{p-1} \right] \\
     \leq \frac{pk}{4}\mathbb{E}\left[ |X_t - X'_t|^{p} \right] + \frac{(4 (p-1))^{p-1} L_3^p}{(p k)^{p-1}} |a-a'|^p,
\end{multline*}

using that $ab \leq \frac{\varepsilon^q}{q} a^q + \frac{1}{\varepsilon^{q'} q'} b^{q'}$ for $\varepsilon = \left( \frac{p m}{4 (p-1) L_3} \right)^{\frac{p-1}{p}}$. With the same kind of argument, we get that

\begin{multline*}
    \frac{d p(p-1)}{2} \left|\sqrt{\frac{2}{\beta}} -  \sqrt{\frac{2}{\beta'}} \right|^2 \mathbb{E} \left[ |X_t - X'_t|^{p-2} \right] \\
     \leq \frac{pk}{4}\mathbb{E}\left[ |X_t - X'_t|^{p} \right] + (p-1) d \left( \frac{2 d (p-1)(p-2)}{k} \right)^{\frac{p}{2}} \left|\sqrt{\frac{2}{\beta}} -  \sqrt{\frac{2}{\beta'}} \right|^p.
\end{multline*}

Putting everything together, we have

\begin{equation*}
    \frac{\dr}{\dr t} \mathbb{E}\left[ |X_t - X'_t|^p \right] \leq - \lambda \mathbb{E}\left[ |X_t - X'_t|^p \right] + K_{1,d,p} |a - a'|^p + K_{2,d,p} \left|\sqrt{\frac{2}{\beta}} -  \sqrt{\frac{2}{\beta'}} \right|^p, 
\end{equation*}

with $\lambda = \frac{pk}{2}$, $K_{1,d,p} = \frac{(4 (p-1))^{p-1} L_3^p}{(p k)^{p-1}}$ and $K_{2,d,p} = (p-1) d \left( \frac{2 d (p-1)(p-2)}{k} \right)^{\frac{p}{2}} $, which is exactly the inequality we were looking for.

\end{proof}

\section*{Acknowledgments}

The author would like to thank Vincent Calvez for valuable feedback on the manuscript. This project has received funding from the European Research Council (ERC) under the European Union’s Horizon 2020 research and innovation programme (grant agreement No 865711).

\section*{Conflicts of interest}

The authors do not work for, advise, own shares in, or receive funds from any organization
that could benefit from this article, and have declared no affiliations other than their research
organizations.



\begin{thebibliography}{99}

\bibitem{BHATIA}
R.~Bhatia and P.~Rosenthal,
\newblock How and why to solve the operator equation $AX - XB = Y$,
\newblock \emph{Bull. London Math. Soc.}, 29(1):1--21, 1997.

\bibitem{AMBROSIO}
L.~Ambrosio, N.~Gigli, and G.~Savar{\'e},
\newblock Metric measure spaces with Riemannian Ricci curvature bounded from below,
\newblock \emph{Duke Math. J.}, 163(7):1405--1490, 2014.

\bibitem{BOLLEY}
F.~Bolley, I.~Gentil, and A.~Guillin,
\newblock Convergence to equilibrium in Wasserstein distance for Fokker--Planck equations,
\newblock \emph{J. Funct. Anal.}, 263(8):2430--2457, 2012.

\bibitem{ERNST}
O.~G. Ernst, A.~Pichler, and B.~Sprungk,
\newblock Wasserstein sensitivity of risk and uncertainty propagation,
\newblock \emph{SIAM/ASA J. Uncertain. Quantif.}, 10(3):915--948, 2022.

\bibitem{ROY}
S.~Roy, S.~Pal, A.~Manoj, S.~Kakarla, J.~V. Padilla, and M.~Alajmi,
\newblock A Fokker--Planck framework for parameter estimation and sensitivity analysis in colon cancer,
\newblock \emph{AIP Conf. Proc.}, 2522(1):070005, 2022.

\bibitem{CARLES}
R.~Carles, Q.~Chauleur, and G.~Ferriere,
\newblock On the dependence of the nonlinear Schr\"odinger flow upon the power of the nonlinearity,
\newblock Preprint, 46 pages, 2025.

\bibitem{KUIPER}
K.~Elamvazhuthi, H.~Kuiper, and S.~Berman,
\newblock Controllability to equilibria of the 1-D Fokker--Planck equation with zero-flux boundary condition,
\newblock in \emph{Proc. IEEE 56th Annual Conf. on Decision and Control (CDC)},
Melbourne, Australia, pp.~2485--2491, 2017.

\bibitem{VONNEUMANN}
L.~Mirsky,
\newblock A trace inequality of John von Neumann,
\newblock \emph{Monatsh. Math.}, 79(4):303--306, 1975.

\bibitem{EBERLE}
A.~Eberle,
\newblock Reflection couplings and contraction rates for diffusions,
\newblock \emph{Probab. Theory Related Fields}, 166(3):851--886, 2016.

\bibitem{ARNOLD}
A.~Arnold, S.~Jin, and T.~W\"ohrer,
\newblock Sharp decay estimates in local sensitivity analysis for evolution equations with uncertainties:
From ODEs to linear kinetic equations,
\newblock \emph{J. Differential Equations}, 268(3):1156--1204, 2020.

\bibitem{DAVID}
N.~David, A.~R. M\'esz\'aros, and F.~Santambrogio,
\newblock Improved convergence rates for the Hele--Shaw limit in the presence of confining potentials,
\newblock \emph{J. \'Ecole Polytech. Math.}, 13:41--71, 2025.

\bibitem{VILLANI}
C.~Villani,
\newblock \emph{Optimal Transport: Old and New},
\newblock Grundlehren der mathematischen Wissenschaften, vol.~338,
Springer, Berlin, 2009.

\end{thebibliography}
\end{document}